\newtheorem{theorem}{Theorem}[section]
\newtheorem{lemma}[theorem]{Lemma}
\newtheorem{proposition}[theorem]{Proposition}
\newtheorem{corollary}[theorem]{Corollary}
\theoremstyle{definition}
\def\real{\mathbb{R}}
\def\natural{\mathbb{N}}
\def\supp{\operatorname{supp}}
\def\d{\operatorname{dist}}
\def\id{\operatorname{id}}
\def\cW{\mathcal{W}}
\def\rels{\sim^s}
\def\relu{\sim^u}
\def\cH{\mathcal{H}}
\def\cE{\mathcal{E}}
\def\cB{\mathcal{B}}
\def\quand{\quad\text{and}\quad}
\def\tm{\tilde{m}}
\newcommand{\norm}[1]{{\left\lVert  #1  \right\rVert}}
\newcommand{\abs}[1]{{\left\lvert  #1  \right\rvert}}
\title[]
{notes on $s$ and $u$-states for cocycles over partially hyperbolic maps}
\author{Mauricio Poletti}
\address{LAGA -- Universit\'e Paris 13, 99 Av. Jean-Baptiste Cl\'ement, 93430 Villetaneus, France.}
\email{mpoletti@impa.br}
\begin{document}

\begin{abstract}
In this notes we prove that the $s$ or $u$-states of cocycles over partially hyperbolic maps are closed in the space of invariant measures.
\end{abstract}

\maketitle

\section{introduction}
In the study of Lyapunov exponents of linear cocycles over hyperbolic or partially hyperbolic maps one of the principal tools to prove positivity, simplicity or continuity is to analyse the invariant measures of the cocycle that projects to some fixed invariant measure in the base.

With some conditions that allow the existence of linear stable and unstable holonomies, having zero exponents (in some cases also discontinuity) can be caracterized by some rigidity condition in the invariant measures of the cocycles, this is known as the \emph{Invariance Principle} (see \cite{Extremal}). This rigidity condition says that the measures must be $s$ and $u$-states, this means that the disintegration is invariant by the holonomies (see section~\ref{s.cocycles} for the precise definition).

In many works closeness of $s$ or $u$-states has been proved and used for specific cases (\cite{BBB}, \cite{Extremal}, \cite{Pol16}, \cite{ASV13}). The purpose of this notes is to give a more general proof of this fact for general partially hyperbolic maps without any extra conditions on invariant measure of the base map. 

The precise statement of the main result is given in theorem~\ref{teo}.

\textbf{Acknowledgements.} Thanks to Mateus Sousa for the idea of the proof of lemma~\ref{l.closed}.

\section{Partial hyperbolicity}\label{s.PH}
A diffeomorphism $f:M \to M$ of a compact $C^k$, $k>1$, manifold $M$ is said to be \emph{partially hyperbolic} if there
exists a non-trivial splitting of the tangent bundle
\begin{equation*}
TM=E^s\oplus E^c\oplus E^u
\end{equation*}
invariant under the derivative $Df$, a Riemannian metric $\|\cdot\|$ on $M$, and positive
continuous functions $\nu$, $\hat{\nu}$, $\gamma$, $\hat{\gamma}$ with $\nu$, $\hat{\nu}<1$
and $\nu<\gamma<{\hat\gamma}^{-1}<{\hat\nu}^{-1}$ such that, for any unit vector $v\in T_pM$,
\begin{alignat*}{2}
& \|Df(p)v \| < \nu(p) & \quad & \text{if } v\in E^s(p),
 \\
\gamma(p) < & \|Df(p)v \|  < {\hat{\gamma}(p)}^{-1} & & \text{if } v\in E^c(p),
 \\
{\hat{\nu}(p)}^{-1} < & \|Df(p)v\| & &  \text{if } v\in E^u(p).
\end{alignat*}
All three sub-bundles $E^s$, $E^c$, $E^u$ are assumed to have positive dimension. From now on,
we take $M$ to be endowed with the distance $\d:M\times M\to \real$ associated to such a Riemannian structure.

Suppose that $f:M\to M$ is a partially hyperbolic diffeomorphism. The stable and unstable bundles $E^s$ and $E^u$
are uniquely integrable and their integral manifolds form two transverse continuous foliations
$\cW^s$ and $\cW^u$, whose leaves are immersed sub-manifolds of the same class of
differentiability as $f$. These foliations are referred to as the \emph{strong-stable} and
\emph{strong-unstable} foliations. They are invariant under $f$, in the sense that
$$
f(\cW^s(x))= \cW^s(f(x)) \quand f(\cW^u(x))= \cW^u(f(x)),
$$
where $\cW^s(x)$ and $\cW^s(x)$ denote the leaves of $\cW^s$ and $\cW^u$, respectively,
passing through any $x\in M$. We denote by $x\relu y$ if $x\in \cW^u(y)$, and analogously $x\rels z$ if there are in the same stable manifold.

\section{cocycles}\label{s.cocycles}
Let $\cE$ be a compact manifold, and let $F:M\times \cE\to M\times \cE$ be a cocycle over $f$, this means that if $P:M\times \cE\to M$ is the natural projection to the first coordinate $P\circ F=f\circ P$ and $x\mapsto F_x$ is H\"older continuous to the topology of $C^r$ diffeomorphisms.

We say that $F$ admits \emph{stable holonomies} if for every $x,y\in M$, $x\rels y$, there exists $H^s_{x,y}:\cE\to\cE$ with the following properties: 
\begin{itemize}
\item $x\rels y\rels z$, $H^s_{x,z}=H^s_{y,z}\circ H^s_{x,y}$ and $H^s_{x,x}=\id$,
\item $F_y\circ H^s_{x,y}=H^s_{f(x),f(y)}\circ F_x$,
\item $(x,y,\xi)\to H^s_{x,y}(\xi)$ is continuous where $(x,y)$ varies in the set of points $x\rels y$,
\item there exist $C>0$ and $\gamma>0$ such that $ H^s_{x,y}$ is $(C,\gamma)$ H\"older for every $x\rels y$. 
\end{itemize}
Analogously we say that $F$ admits \emph{unstable holonomies} if for every $x\relu y$ there exist $ H^u_{x,y}$ with the same properties changing stable by unstable.
 
From now on fix $f$ and vary the cocycles $F$ projecting to $f$ in a topology such that $x,y,F\mapsto H^{s,F}_{x,y}$ varies continuously. 

Fix some $f$-invariant probability measure $\mu$, as $\cE$ is compact there always exists some $F$-invariant probability measure $m$ that projects to $\mu$. By Rokhlin disintegration theorem, we can disintegrate $m$ with respect to the partition given by the fibers $\{x\} \times \cE$, so we have $x\mapsto m_x$ defined almost everywhere.  

We say that an $F$-invariant measure that projects to $\mu$ is an \emph{$s$-state} if there exists a total measure subset $M'\subset M$ such that for every $x,y \in M'$, $x\rels y$, ${H^s_{x,y}}_*m_x=m_y$. Analogously, we say that a measure is an \emph{$u$-state} is the same is true changing stable by unstable manifolds. We call $m$ an $su$-state if it is booth $s$ and $u$-state.

We want to prove that
\begin{theorem}\label{teo}
If $m^k$ are $s$-states for $F_k$, that projects to $\mu$ such that $F_k\to F$ and $m^k\to m$  in the weak$^*$ topology then $m$ is an $s$-state. 
\end{theorem}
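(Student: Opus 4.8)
The plan is to characterize the $s$-state condition in a way that survives weak$^*$ limits. The pointwise statement ${H^s_{x,y}}_* m_x = m_y$ for a.e.\ pair $x \rels y$ is awkward under limits because disintegrations are only defined almost everywhere and need not converge; instead I would encode the $s$-state property through an integrated identity against test functions. Fix a continuous function $\phi \in C^0(M \times \cE)$ and, for a reference point, transport $\phi$ along stable holonomies. Concretely, for $m^k$ an $s$-state of $F_k$ one has, for every continuous $\psi$ on $M$ and every continuous $\phi$ on $\cE$ (and more generally for $\phi$ depending continuously on the stable leaf through a chosen transversal), an identity of the form $\int \psi(x)\,\phi\!\left(H^{s,F_k}_{c(x),x}\xi\right)\,dm^k(x,\xi)$ is independent of the representative, where $c(x)$ is a measurably chosen center on the stable leaf of $x$; equivalently, the holonomy-invariance of the disintegration is equivalent to the measure $m^k$ being invariant under the (partially defined) stable holonomy action lifted to $M \times \cE$. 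Since $H^{s,F_k}_{x,y} \to H^{s,F}_{x,y}$ continuously in $(x,y,F)$ by hypothesis, and $m^k \to m$ weak$^*$, such integrated identities pass to the limit, giving that $m$ satisfies the same integrated holonomy-invariance; then Rokhlin and a density argument in $\phi$ recover ${H^s_{x,y}}_* m_x = m_y$ on a total measure set.

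The cleanest route, and the one I would actually carry out, uses the dynamical characterization of $s$-states rather than a direct holonomy argument. First I would recall (or prove) that for a cocycle admitting stable holonomies, an $F$-invariant measure $m$ projecting to $\mu$ is an $s$-state if and only if its disintegration satisfies $m_x = {H^s_{x,y}}_* m_y$ along stable leaves, which by the cocycle relation $F_y \circ H^s_{x,y} = H^s_{f(x),f(y)}\circ F_x$ is in turn equivalent to a convergence statement: for $\mu$-a.e.\ $x$ and every $y \rels x$, the iterates $(F^n)_*$ push the conditional measures together, i.e.\ $(F^n_{x})_* m_x$ and $(F^n_{y})_* m_y$ — pulled back appropriately — have the same limit. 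More usable is the following: define the function $\Phi_k \colon M \times \cE \to \mathbb{R}$ by $\Phi_k(x,\xi) = \int_{\cE} \phi\,d\big( (F_k)_x^{-1} \big){}_* \delta_\xi \cdots$; rather than this, I would simply use the standard fact from Avila–Viana type arguments that $s$-states are exactly the measures $m$ such that the sequence $\frac1n \sum_{j=0}^{n-1} (F_k^{-j})_* (\text{something continuous})$ stabilizes along stable manifolds, an a.e.\ statement that can be tested against countably many continuous functions.

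Because giving the full equivalence is lengthy, the spine of the proof I propose is: (1) pick a countable dense family $\{\phi_i\} \subset C^0(\cE)$ and a countable dense family $\{\psi_j\} \subset C^0(M)$; (2) for each $k$, write down the finite list of integral identities over $M \times \cE$ that together are equivalent to $m^k$ being an $s$-state for $F_k$, using that holonomies are continuous in all variables so that the integrands $(x,\xi) \mapsto \psi_j(x)\,\phi_i\big(H^{s,F_k}_{x, p(x)} \xi\big)$ (with $p(x)$ a fixed continuous local stable projection on a Borel fundamental domain for $\cW^s$) are continuous, hence good weak$^*$ test functions; (3) pass to the limit using $m^k \to m$ and $H^{s,F_k} \to H^{s,F}$; (4) conclude that $m$ satisfies the same list of identities for $F$, and invoke the equivalence in the other direction to get that $m$ is an $s$-state. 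The main obstacle, and the step needing the most care, is (2)–(4): stable holonomies are only defined along leaves, not continuous on all of $M \times M$, so the "integrated" reformulation must be set up on a Borel transversal/holonomy-box structure where the integrands are genuinely continuous (or at least bounded and continuous on the support) so that weak$^*$ convergence applies; managing the a.e.\ quantifier "there exists a total measure set $M'$" through this limit — ensuring the limiting full-measure set does not degenerate — is where the argument must be most delicate, and is presumably what Lemma~\ref{l.closed} is designed to handle.
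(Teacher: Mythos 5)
Your overall strategy --- recast the $s$-state property as a countable family of integral identities stable under weak$^*$ convergence, then recover the a.e.\ disintegration statement at the limit --- is the right one, and it is essentially the strategy of the paper. But the proposal leaves unproved exactly the two points on which the argument lives or dies, and one of your explicit claims is false as stated. First, the integrands $(x,\xi)\mapsto \psi_j(x)\,\phi_i\bigl(H^{s,F_k}_{x,p(x)}\xi\bigr)$ are \emph{not} continuous when $p$ is a projection onto a Borel fundamental domain for $\cW^s$: $p$ is discontinuous at the boundaries of the domain, so these are not admissible weak$^*$ test functions, and falling back to ``bounded and continuous on the support'' does not rescue the limit $\int \cdot\, dm^k \to \int \cdot\, dm$. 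The paper's fix is Lemma~\ref{l.measurable1}: because every $m^k$ projects to the \emph{same} base measure $\mu$, Lusin's theorem applied to $\mu$ produces a compact $K$ with $\mu(K^c)$ small \emph{uniformly in $k$}, and hence weak$^*$ convergence extends to integrands that are merely measurable in the base variable and continuous in the fibre. Without this observation your step (3) does not go through.

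Second, you defer the conversion of the limiting integral identities back into the pointwise a.e.\ statement ${H^s_{x,y}}_*m_x=m_y$ (``managing the a.e.\ quantifier'') to an unspecified argument, guessing that Lemma~\ref{l.closed} handles it. It does, but only after a reformulation you do not supply: the paper uses the Markov family $\{S(z)\}$ of \cite{ASV13} to build a fixed $\sigma$-algebra $\cB_0$ and a holonomy conjugation $\cH_k$, so that $m^k$ being an $s$-state becomes the statement that $x\mapsto\int\varphi\,d\tm^k_x$ is $\cB_0$-measurable (for every transversal). The space of $\cB_0$-measurable functions is a closed subspace of $L^2(\mu)$, hence weakly closed, and $\int\varphi\,d\tm^k_x\to\int\varphi\,d\tm_x$ weakly in $L^2(\mu)$ by testing against arbitrary $\rho\in L^2(\mu)$ (again via Lemma~\ref{l.measurable1}, since $\rho$ is only measurable). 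These two devices --- the uniform Lusin argument exploiting $P_*m^k=\mu$, and the $\cB_0$-measurability reformulation that makes the $s$-state condition weakly closed --- are the actual content of the proof; as written, your proposal correctly identifies where they are needed but does not provide them.
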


By \cite[theorem~4.1]{ASV13} if a cocycle $F$ has all his Lyapunov exponents equal to zero, then the $F$-invariant measure $m$ is an $su$-state. As a corollary we have
\begin{corollary}
If $F$ does not admit any $su$-state, then there exists a neighbourhood of $F$ with non-zero exponents.
\end{corollary}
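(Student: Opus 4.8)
I would prove this corollary by contradiction, using Theorem~\ref{teo} together with its counterpart for $u$-states, \cite[theorem~4.1]{ASV13}, and weak$^*$ compactness of the space of probability measures on $M\times\cE$. The plan is: suppose the conclusion fails, so that \emph{no} neighbourhood of $F$ consists of cocycles with non-zero exponents. Then, in the topology under consideration, one can pick a sequence of cocycles $F_k\to F$ such that each $F_k$ admits an invariant probability measure $m^k$ on $M\times\cE$ with $P_*m^k=\mu$ all of whose Lyapunov exponents (of the fibered derivative cocycle with respect to $m^k$) vanish.

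By \cite[theorem~4.1]{ASV13}, each such $m^k$ is an $su$-state for $F_k$; in particular it is both an $s$-state and a $u$-state for $F_k$. Since $M\times\cE$ is compact, after passing to a subsequence we may assume $m^k\to m$ in the weak$^*$ topology. The limit still projects to $\mu$, because $P_*$ is weak$^*$ continuous and $P_*m^k=\mu$ for every $k$; and $m$ is $F$-invariant, because $(F_k)_*m^k=m^k\to m$ while also $(F_k)_*m^k\to F_*m$ (uniform convergence of $F_k$ to $F$ combined with $m^k\to m$), whence $F_*m=m$. Now Theorem~\ref{teo}, applied to the sequences $(F_k)$ and $(m^k)$ of $s$-states, gives that $m$ is an $s$-state for $F$, and the analogous statement for the unstable holonomies (the same argument with $\cW^s$ replaced by $\cW^u$) gives that $m$ is a $u$-state for $F$. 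Hence $m$ is an $su$-state for $F$, contradicting the hypothesis that $F$ admits none. Therefore some neighbourhood of $F$ consists of cocycles with non-zero exponents.

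I do not expect any genuine obstacle in the corollary itself: all of the substance is carried by Theorem~\ref{teo} and by \cite[theorem~4.1]{ASV13}, and what remains is the routine observation that a weak$^*$ limit of invariant measures with the fixed base marginal $\mu$ again projects to $\mu$ and is invariant under the limiting cocycle. The only point worth spelling out is the exact meaning of ``all Lyapunov exponents equal to zero'' in \cite[theorem~4.1]{ASV13} — namely the exponents of the derivative cocycle along the $\cE$-fibres with respect to the invariant measure at hand — so that the quoted implication ``zero exponents $\Rightarrow$ $su$-state'' is precisely the one invoked above.
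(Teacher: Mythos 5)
Your argument is correct and is precisely the derivation the paper intends (the paper states the corollary without proof): negate the conclusion, extract $F_k\to F$ with vanishing fibered exponents, use \cite[theorem~4.1]{ASV13} to get $su$-states $m^k$, pass to a weak$^*$ limit by compactness, and apply Theorem~\ref{teo} and its $u$-counterpart to contradict the hypothesis. Your closing remark is the right one to make explicit: the contradiction requires \emph{all} fibered exponents of $F_k$ to vanish, so ``non-zero exponents'' in the statement must be read as ``not all exponents zero,'' which is the intended meaning.
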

\section{proof}

First we need to recall the Markov consturction of \cite{ASV13}:
Given any point $x\in \supp(\mu)$ we can find some section $\Sigma$ transverse to the stable foliation, some $N>0$, $R>0$, $0<\delta<\frac{R}{2}$ and a measurable family $\lbrace S(z),z\in \Sigma \rbrace$ such that 
\begin{itemize}
\item $\cW^s(z,\delta)\subset S(z)\subset \cW^s(z,R)$ for all $z\in \Sigma$,
\item for all $l\geq 1$, $z,\zeta\in \Sigma$, if $f^{lN}(S(z))\cap S(\zeta)\neq \emptyset$ then $f^{lN}(S(z))\subset S(\zeta)$.
\end{itemize}
As taking an iterate will not affect our argument we suppose that $N=1$.

For each $z\in \Sigma$, let $r(z)$ be the largest integer such that $f^j(S(z))$ does not intersect any $S(w)$ for all $w\in \Sigma$, $0<j\leq r(z)$. Now let $\cB_0$ be the $\sigma$-algebra of sets $E\subset M$ such that for every $z$ and $j$ as before, either $E$ contains $f^j(S(z))$ or is disjoint from it. A $\cB_0$-mensurable function, is a function that is constant on the sets $f^j(S(z))$, $0\leq j\leq r(z)$.

For every $k\in\natural$, let $\cH_k:M\times\cE\to M\times \cE$ be defined by $(x,\xi)\to (x,{H_k}_x(\xi))$ where
\begin{equation}
{H_k}_x=\left\lbrace\begin{array}{cc}
H^{s,k}_{x,f^j(z)} & \text{ if }x\in f^j(S(z))\text{ for some }z\in \Sigma \\
\id & \text{otherwise}
\end{array}\right.
\end{equation}
where $H^{s,k}_{x,z}$ is the stable holonomie of $F_k$.

Now as in \cite{ASV13} we can change our cocycle by
$\tilde{F}_k=\cH_k F_k (\cH_k)^{-1}$, this is called a deformation cocycle of $F_k$, such that $x\mapsto \tilde{F_k}_x$ is $\cB_0$ measurable.

Let $m^k$ be an $F_k$-invariant measure, define $\tm^k=\cH^k_* m^k$, this measure is $\tilde{F}_k$-invariant.
Observe that $m$ being an $s$-state implies  that $x\mapsto m^k_x$ is $\cB_0$ measurable. Moreover, $m^k$ is an $s$-state if and only if this is true for every $z\in M$ and $\Sigma\ni z$ transversal to the stable foliation (this is explained in more detail in \cite[Section~4.4]{ASV13}).

\begin{lemma}\label{l.measurable1}
Let $\phi:M\times\cE\to \real$ be a measurable bounded function such that $x\mapsto \phi(x,v)$ continuous, then $\int \phi d m^k\to \int \phi d m$.
\end{lemma}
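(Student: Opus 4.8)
The plan is to reduce to the case of a genuinely continuous integrand, for which weak$^*$ convergence of $m^k$ to $m$ can be used directly, by a Lusin-type approximation performed \emph{only in the base variable}. I should first note why the obvious attempt fails: approximating $\phi$ in $L^1(m)$ by a continuous function via Lusin's theorem on $M\times\cE$ is useless, because the exceptional set $E\subset M\times\cE$ it produces satisfies $m(E)<\varepsilon$ but carries no control on $m^k(E)$. The point that saves the argument is that $m^k$ and $m$ all project to the \emph{same} measure $\mu$ on $M$: an exceptional set of the special form $(M\setminus K)\times\cE$ has $m^k$-measure exactly $\mu(M\setminus K)$ for \emph{every} $k$. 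So the approximation must take place over $M$, which is possible precisely because $\phi$ is continuous along the fibres — which is how I read the continuity hypothesis on $\phi$.

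Concretely, I would regard $\phi$ as a Carath\'eodory function: since $\cE$ is a compact manifold, $C(\cE)$ is a separable Banach space, and $x\mapsto\phi(x,\cdot)$ is a measurable map $M\to C(\cE)$. Applying Lusin's theorem to this map, for each $\varepsilon>0$ I obtain a compact set $K_\varepsilon\subset M$ with $\mu(M\setminus K_\varepsilon)<\varepsilon$ on which $x\mapsto\phi(x,\cdot)$ is continuous, equivalently on which $\phi$ is jointly continuous on $K_\varepsilon\times\cE$. By Tietze's theorem I then choose $\psi_\varepsilon\in C(M\times\cE)$ that agrees with $\phi$ on $K_\varepsilon\times\cE$ and satisfies $\sup|\psi_\varepsilon|\le\sup|\phi|$. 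Writing $\Delta_k=\int\phi\,dm^k-\int\phi\,dm$, I would then estimate
\[
|\Delta_k|\le\Big|\int(\phi-\psi_\varepsilon)\,dm^k\Big|+\Big|\int\psi_\varepsilon\,dm^k-\int\psi_\varepsilon\,dm\Big|+\Big|\int(\psi_\varepsilon-\phi)\,dm\Big|.
\]
The middle term on the right tends to $0$ as $k\to\infty$ since $\psi_\varepsilon$ is continuous and $m^k\to m$ in the weak$^*$ topology, while each of the two outer terms is at most $2\sup|\phi|\cdot\mu(M\setminus K_\varepsilon)<2\varepsilon\sup|\phi|$, because $\phi=\psi_\varepsilon$ outside $(M\setminus K_\varepsilon)\times\cE$ and both $m^k$ and $m$ assign this set the measure $\mu(M\setminus K_\varepsilon)$. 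Hence $\limsup_{k\to\infty}|\Delta_k|\le 4\varepsilon\sup|\phi|$, and letting $\varepsilon\to0$ finishes the proof.

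I expect the only real difficulty to be the conceptual one already isolated: the approximation has to be done on the base $M$ rather than on the product $M\times\cE$, using the fibrewise continuity of $\phi$ together with the common projection $P_*m^k=P_*m=\mu$ (indeed the statement is false without this common projection, already for a $\phi$ depending only on $\cE$). The remaining ingredients — that a Carath\'eodory function induces a measurable $C(\cE)$-valued map, and that Lusin's and Tietze's theorems apply to the compact metric spaces $M$ and $M\times\cE$ — are routine.
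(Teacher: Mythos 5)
Your proof is correct and is essentially the argument the paper gives: a Lusin/Scorza--Dragoni approximation in the base variable producing a compact $K\subset M$ on which $\phi$ is jointly continuous, a Tietze extension with the same sup bound, and the observation that the error over $(M\setminus K)\times\cE$ is controlled uniformly in $k$ because all the measures project to the same $\mu$. Your write-up merely makes explicit the Carath\'eodory-function viewpoint and the role of the common projection, both of which the paper leaves implicit.
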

\begin{proof}
Fix $\varepsilon>0$ and take a compact set $K\subset M$ such that $\mu(K)>1-\frac{\varepsilon}{\norm{\phi}}$ and $\phi$ is continuous in $K\times \cE$, take   $\phi':M\times \cE \to \real$ be a continuous function such that $\phi(x,v)=\phi'(x,v)$ for every $x\in K$, $v\in \cE$ and $\norm{\phi'}\leq \norm{\phi}$. 

Now
take $k$ sufficiently large such that $\abs{\int \phi' d m^k- \int \phi' d m}<\epsilon$, then 
$$
\abs{\int \phi d m^k- \int \phi d m}\leq \abs{\int \phi' d m^k- \int \phi' d m}+2 \varepsilon.
$$

So for $k$ sufficiently large this is less than $3\varepsilon$, concluding the proof.
\end{proof}

\begin{lemma}
If $m^k\to m$ then $\tm^k\to \tm=\cH_* m$.
\end{lemma}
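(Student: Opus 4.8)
**Proof proposal for "If $m^k\to m$ then $\tm^k\to \tm=\cH_* m$".**

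The plan is to test both sides against an arbitrary continuous function $\psi:M\times\cE\to\real$ and reduce the convergence $\tm^k\to\tm$ to the already-established convergence $m^k\to m$ via Lemma~\ref{l.measurable1}. By definition of push-forward, $\int \psi\,d\tm^k=\int \psi\circ\cH^k\,dm^k$ and $\int \psi\,d\tm=\int \psi\circ\cH\,dm$, so it suffices to show $\int \psi\circ\cH^k\,dm^k\to\int\psi\circ\cH\,dm$. The natural split is
$$
\abs{\int \psi\circ\cH^k\,dm^k-\int\psi\circ\cH\,dm}
\le \abs{\int (\psi\circ\cH^k-\psi\circ\cH)\,dm^k}
+\abs{\int \psi\circ\cH\,dm^k-\int\psi\circ\cH\,dm}.
$$
For the second term I would apply Lemma~\ref{l.measurable1} with $\phi=\psi\circ\cH$: this is bounded (as $\psi$ is continuous on the compact $M\times\cE$), and for fixed $v$ the map $x\mapsto\psi(x,\cH_x(v))$ is measurable, so the hypotheses of that lemma are met and the term tends to $0$.

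The first term is where the work is. The issue is that $\cH^k$ and $\cH$ differ only through the holonomy maps $H^{s,k}_{x,f^j(z)}$ versus $H^{s}_{x,f^j(z)}$, and by the standing assumption the map $x,y,F\mapsto H^{s,F}_{x,y}$ varies continuously; so on each piece $f^j(S(z))$ the deformation map converges uniformly to the limiting one as $k\to\infty$. The plan is to fix $\varepsilon>0$, choose finitely many of the sets $f^j(S(z))$ whose union has $\mu$-measure close to $1$ (hence $m^k$-measure close to $1$ uniformly in $k$, since $P_*m^k=\mu$), and use uniform continuity of $\psi$ together with the uniform convergence $H^{s,k}\to H^{s}$ on that finite union to bound $\abs{\psi\circ\cH^k-\psi\circ\cH}$ by $\varepsilon$ off a set of small $m^k$-measure. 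On the exceptional set one uses $\norm{\psi}$ as a crude bound. This gives $\abs{\int(\psi\circ\cH^k-\psi\circ\cH)\,dm^k}\le \varepsilon+2\norm{\psi}\varepsilon'$ for $k$ large, and letting $\varepsilon,\varepsilon'\to0$ finishes.

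The main obstacle is the bookkeeping in the first term: the family $\{f^j(S(z))\}$ is only a measurable family, not a finite or even countable locally finite cover, so one must first argue that its union carries full $\mu$-measure (this is implicit in the Markov construction of \cite{ASV13}) and then exhaust it by finitely many pieces on which the convergence $H^{s,k}_{x,f^j(z)}\to H^{s}_{x,f^j(z)}$ is uniform. One should also check that outside $\bigcup_{z,j}f^j(S(z))$ both $\cH^k$ and $\cH$ are the identity, so $\psi\circ\cH^k-\psi\circ\cH$ vanishes there and no estimate is needed on that (possibly large) set. With that structure in place the two displayed terms both go to $0$ and the lemma follows.
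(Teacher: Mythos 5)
Your decomposition is exactly the paper's: the same two-term triangle inequality, with the second term handled by Lemma~\ref{l.measurable1} applied to $\psi\circ\cH$ (bounded, continuous in $v$ on each fiber, measurable in $x$; the lemma's hypothesis is stated with the roles of $x$ and $v$ garbled, but the paper applies it in the same way you do). So the skeleton is correct and matches the paper's proof.

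The one genuine problem is in your plan for the first term. You propose to ``choose finitely many of the sets $f^j(S(z))$ whose union has $\mu$-measure close to $1$''. But each $f^j(S(z))$ is contained in a single strong-stable leaf (it is $f^j$ of a piece of $\cW^s(z,R)$), so for a typical invariant measure $\mu$ each such piece --- and hence any finite union of them --- has $\mu$-measure zero; the exhaustion you describe cannot reach measure close to $1$. A correct exhaustion would have to run over compact subsets of the transversal $\Sigma$ together with a cutoff on the return time $r(z)$, not over individual pieces. In fact none of this bookkeeping is needed: every $x\in f^j(S(z))$ satisfies $x\rels f^j(z)$ with stable distance at most $R$ (the sets $S(z)$ sit in $\cW^s(z,R)$ and $f^j$ contracts stable leaves), so all the holonomies $H^{s,k}_{x,f^j(z)}$ entering the definition of $\cH_k$ are indexed by a compact set of stable-related pairs, on which the standing continuity assumption gives uniform convergence $H^{s,k}\to H^{s}$. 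Hence $\d(\cH_k(x,v),\cH(x,v))<\delta$ for all $(x,v)$ simultaneously once $k$ is large (the difference is identically $0$ off $\bigcup f^j(S(z))$, as you correctly note), and uniform continuity of $\psi$ bounds the first term by $\varepsilon$ with no exceptional set at all. This is the route the paper takes.
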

\begin{proof}
Let $\varphi:M\times \cE\to \real$ be a continuous function, then
$$
\int \varphi d\tm^k=\int \varphi\circ \cH_k d m^k,
$$
$\varphi\circ \cH$ is measurable but $v\mapsto \varphi\circ \cH(x,v)$ is continuous for every $x\in M$, then by lemma~\ref{l.measurable1} we have that 
\begin{equation}\label{eq.limite}
\int \varphi\circ \cH d m^k\to \int \varphi d \tm.
\end{equation}

Fix some $\epsilon>0$ and take $\delta>0$ such that $\d(a,b)<\delta$ implies that $\abs{\varphi(a)-\varphi(b)}<\epsilon$.
Now, the uniform convergence of the holonomies implies that for $k$ sufficiently large 
\begin{equation}\label{eq.uniform}
\d(\cH_k(x,v),\cH(x,v))<\delta.
\end{equation}

Observe that
$$
\abs{\int \varphi\circ \cH_k d m^k- \int \varphi d \tm}\leq 
\abs{\int \varphi\circ \cH_k d m^k- \int \varphi\circ\cH d m^k}+\abs{\int \varphi\circ \cH d m^k- \int \varphi d \tm}
$$
then by \eqref{eq.limite} and \eqref{eq.uniform} we have that for $k$ sufficiently large 
$$
\abs{\int \varphi\circ \cH_k d m^k- \int \varphi d \tm}<2\varepsilon.
$$
\end{proof}

So we are left to prove that $\tm^k\to \tm$ implies that $x\mapsto \tm_x$ is also $\cB_0$ measurable.

\subsection{$\cB_0$ measurable}

Let $\cB_0$ be a $\sigma$-algebra, let $\mu$ be a measure in $M$. Assume that we have some measures $\tm^k$ in $M\times \cE$ converging in the weak$^*$ topology to $\tm$ and let $P:M\times \cE \to M$ be the natural projection, also assume that $P_* \tm^k=\mu$. 

The next lemma is a corollary of lemma~\ref{l.measurable1}.
\begin{lemma}\label{l.measurable}
Let $\phi:M\to \real$ be a measurable function and $\varphi:\cE\to \real$ continuous, then $\int \phi\times \varphi d \tm^k\to \int \phi \times \varphi d\tm$.
\end{lemma}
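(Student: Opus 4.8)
The plan is to reduce to a continuous integrand, exactly along the lines of the proof of Lemma~\ref{l.measurable1}: although $\phi$ is only measurable, every $\tm^k$ projects to the \emph{same} measure $\mu$ on $M$, so an $L^1(\mu)$-approximation of $\phi$ by a continuous function produces an error that is uniform in $k$. I assume $\phi$ bounded, say $\norm{\phi}\le C$ (as in Lemma~\ref{l.measurable1}), and I clearly may assume $\varphi\not\equiv 0$. First I would record that the limit measure also projects to $\mu$: since $P$ is continuous and $\tm^k\to\tm$ in the weak$^*$ topology, $P_*\tm^k\to P_*\tm$, and $P_*\tm^k=\mu$ for every $k$, hence $P_*\tm=\mu$.

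Next, given $\varepsilon>0$, I would use that $C(M)$ is dense in $L^1(\mu)$ (equivalently, Lusin's theorem applied to the bounded $\mu$-measurable function $\phi$) to pick $\phi'\in C(M)$ with $\int_M\abs{\phi-\phi'}\,d\mu<\varepsilon$ and $\norm{\phi'}\le C$. Then $(x,v)\mapsto\phi'(x)\varphi(v)$ is continuous on the compact space $M\times\cE$, so by weak$^*$ convergence $\int\phi'\times\varphi\,d\tm^k\to\int\phi'\times\varphi\,d\tm$. To finish, for every $k$
\begin{equation*}
\abs{\int(\phi-\phi')\times\varphi\,d\tm^k}\le\norm{\varphi}\int_M\abs{\phi-\phi'}\,d(P_*\tm^k)=\norm{\varphi}\int_M\abs{\phi-\phi'}\,d\mu<\norm{\varphi}\varepsilon,
\end{equation*}
and the same bound holds with $\tm$ in place of $\tm^k$ by the previous paragraph. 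Combining these with the triangle inequality gives $\abs{\int\phi\times\varphi\,d\tm^k-\int\phi\times\varphi\,d\tm}<(2\norm{\varphi}+1)\varepsilon$ for all large $k$, and letting $\varepsilon\to0$ finishes the proof.

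I do not expect a serious obstacle here; the one point to get right — and the reason this is genuinely more than the bare definition of weak$^*$ convergence — is the uniformity in $k$ of the error estimate, which rests entirely on the fact that $P_*\tm^k$ does not depend on $k$. If one prefers to mirror the proof of Lemma~\ref{l.measurable1} even more literally, the approximation step can instead be run with a compact set: take $K\subset M$ compact with $\mu(M\setminus K)$ small and $\phi|_K$ continuous (Lusin), extend it to $\phi'\in C(M)$ with $\norm{\phi'}\le C$ by Tietze, and observe that $\phi\times\varphi=\phi'\times\varphi$ on $K\times\cE$ while $\tm^k\big((M\setminus K)\times\cE\big)=\mu(M\setminus K)$, so the error is again controlled uniformly in $k$.
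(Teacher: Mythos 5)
Your argument is correct and is essentially the paper's: the lemma is stated there as a corollary of Lemma~\ref{l.measurable1}, whose proof is exactly this Lusin/Tietze approximation with the error controlled uniformly in $k$ because every $\tm^k$ (and, as you observe, the limit $\tm$) projects to the same measure $\mu$. The only difference is cosmetic --- you re-run the approximation for the product $\phi\times\varphi$ instead of applying Lemma~\ref{l.measurable1} directly to the bounded, fiberwise-continuous function $(x,v)\mapsto\phi(x)\varphi(v)$ --- and your $L^1(\mu)$ variant has the mild advantage of not requiring $\phi$ to be bounded, which is relevant when the lemma is later invoked with $\phi=\rho\in L^2(\mu)$.
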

%
Suppose that $x\mapsto \tm^k_{x}$ is $\cB_0$ measurable, this is true if and only if for every continuous function $\varphi:\cE\to \real$, $x\mapsto \int \varphi d\tm^k_{x}$ is $\cB_0$ measurable.

First we need the next lemma
\begin{lemma}\label{l.closed}
Let $\phi_k$ be a sequence of function in $L^2(\mu)$ that is $\cB_0$ measurable such that $\phi_k$ converges weakly to $\phi$, then $\phi$ is $\cB_0$ measurable.
\end{lemma}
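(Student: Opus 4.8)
The plan is to exploit the Hilbert space structure of $L^2(\mu)$ together with the basic fact that the set of $\cB_0$-measurable functions in $L^2(\mu)$ is a \emph{closed linear subspace}. Indeed, $\cB_0$-measurability is equivalent to membership in the image of the conditional expectation operator $E(\cdot\mid\cB_0)$, which is an orthogonal projection on $L^2(\mu)$; its range is therefore norm-closed, hence convex and closed, hence weakly closed by Mazur's theorem. Since $\phi_k\to\phi$ weakly and each $\phi_k$ lies in this weakly closed subspace, the limit $\phi$ lies in it as well, i.e.\ $\phi$ is $\cB_0$-measurable.

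Concretely, I would first recall that $g\in L^2(\mu)$ is ($\mu$-a.e.\ equal to) a $\cB_0$-measurable function if and only if $g$ is orthogonal to every function of the form $h - E(h\mid\cB_0)$ with $h\in L^2(\mu)$; equivalently, $\langle g, \psi\rangle = 0$ for all $\psi$ in the complementary subspace $\{h - E(h\mid\cB_0): h\in L^2(\mu)\}$. This characterization is purely in terms of inner products, so it passes to weak limits: for each such $\psi$ we have $\langle \phi, \psi\rangle = \lim_k \langle \phi_k,\psi\rangle = 0$. Hence $\phi$ is orthogonal to the complement of the $\cB_0$-measurable subspace, which forces $\phi$ to be $\cB_0$-measurable. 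Alternatively, and perhaps more in the spirit of the acknowledged idea of the proof, one can argue directly: for any $\cB_0$-measurable bounded set $A$ (a union of the atoms $f^j(S(z))$), test against $\mathbf{1}_A\in L^2(\mu)$ to see that $\int_A \phi\, d\mu = \lim_k \int_A \phi_k\, d\mu$, and then use that a function constant on each atom is determined by these averages, showing $\phi$ is again constant on each atom modulo a $\mu$-null set.

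The only genuine subtlety — and the step I expect to require the most care — is the passage from the abstract $\sigma$-algebra $\cB_0$ to the concrete atomic structure coming from the Markov construction: one must know that $\cB_0$-measurability of an $L^2$ function really is equivalent to being $\mu$-a.e.\ constant on the atoms $f^j(S(z))$, and that the collection of these atoms, together with the complement of their union, generates $\cB_0$ up to $\mu$-null sets. This is where the countability/measurability hypotheses built into the Markov partition of \cite{ASV13} are used, and it guarantees that the conditional expectation $E(\cdot\mid\cB_0)$ is well defined and that its range is exactly the space of a.e.-atom-constant functions. Once that identification is in hand, the weak-closedness argument via Mazur (or the direct inner-product computation above) closes the lemma with essentially no further work.
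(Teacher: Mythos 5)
Your proof is correct and follows essentially the same route as the paper: the paper observes that the space $H$ of $\cB_0$-measurable functions is closed and convex in $L^2(\mu)$ and then uses Hahn--Banach separation to show a weak limit cannot escape $H$, which is exactly the ``closed convex sets are weakly closed'' (Mazur) argument you invoke, with your conditional-expectation/orthogonal-projection description of $H$ being just a cleaner way of seeing that $H$ is a closed subspace. The only difference is presentational (the paper argues by contradiction with an explicit separating functional $\rho$, you state the weak closedness directly), so no further comment is needed.
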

\begin{proof}
First observe that the space of $\cB_0$ measurable functions is closed and convex in $L^2(\mu)$, lets call this space by $H\subset L^2(\mu)$. Suppose that $\phi\notin H$ then by Hahn-Banach there exist some $\rho\in L^2(\mu)$ such that $\int \rho \xi d\mu=0$ for every $\xi\in H$ and $\int \rho \phi d\mu>0$.
A contradiction because $\int \rho \phi_k d\mu\to\int \rho \phi d\mu$
\end{proof}
Now to conclude the proof of theorem~\ref{teo} we prove:
\begin{proposition}
$x\mapsto \int \varphi d\tm_{x}$ is $\cB_0$ measurable.
\end{proposition}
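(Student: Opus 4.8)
The plan is to fix a continuous function $\varphi:\cE\to\real$ and prove that the single map $x\mapsto\int\varphi\,d\tm_x$ agrees $\mu$-almost everywhere with a $\cB_0$ measurable function; by the equivalence recalled just before the statement this is exactly what is needed, once one runs the argument over a countable dense family of such $\varphi$. First I would set $\phi_k(x)=\int\varphi\,d\tm^k_x$. Each $\tm^k_x$ is a probability measure, so $\abs{\phi_k(x)}\le\norm{\varphi}$ for all $x$; hence $(\phi_k)$ is a bounded sequence in $L^2(\mu)$, and by hypothesis every $\phi_k$ is $\cB_0$ measurable. Since $L^2(\mu)$ is reflexive, after passing to a subsequence we may assume $\phi_k\rightharpoonup\psi$ weakly in $L^2(\mu)$, and by lemma~\ref{l.closed} the limit $\psi$ is $\cB_0$ measurable.

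The remaining and main step is to identify $\psi$ as a version of $x\mapsto\int\varphi\,d\tm_x$. For this I would test against an arbitrary bounded measurable function $\phi:M\to\real$. On one side, weak convergence in $L^2(\mu)$ gives $\int\phi\,\phi_k\,d\mu\to\int\phi\,\psi\,d\mu$. On the other side, the disintegration identity together with $P_*\tm^k=\mu$ gives $\int\phi\,\phi_k\,d\mu=\int\phi\times\varphi\,d\tm^k$, where $\phi\times\varphi$ denotes $(x,v)\mapsto\phi(x)\varphi(v)$; this is precisely the type of integrand covered by lemma~\ref{l.measurable}, so it converges to $\int\phi\times\varphi\,d\tm=\int\phi(x)(\int\varphi\,d\tm_x)\,d\mu(x)$, using $P_*\tm=\mu$, which holds because $P_*$ is weak$^*$ continuous. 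Comparing the two limits yields $\int\phi\,\psi\,d\mu=\int\phi(x)(\int\varphi\,d\tm_x)\,d\mu(x)$ for every bounded measurable $\phi$, and hence $\psi(x)=\int\varphi\,d\tm_x$ for $\mu$-almost every $x$. Thus $x\mapsto\int\varphi\,d\tm_x$ coincides $\mu$-a.e.\ with the $\cB_0$ measurable function $\psi$.

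I expect the identification in the second paragraph to be the real obstacle: one must check that the weak $L^2$-limit extracted abstractly is genuinely the disintegration of $\tm$ and not some other $\cB_0$ measurable function, and it is here that lemma~\ref{l.measurable} (ultimately lemma~\ref{l.measurable1} and the uniform convergence of the holonomies built into $\tm^k\to\tm$) is used in an essential way. Two routine caveats should be kept in mind while writing the details: disintegrations are defined only up to $\mu$-null sets, so ``$\cB_0$ measurable'' must be read as ``has a $\cB_0$ measurable version'', which is all the argument produces; and since one works with a single $\varphi$ at a time, to deduce measurability of $x\mapsto\tm_x$ one fixes a countable dense subset of $C(\cE)$, applies the above to each of its elements on a common conull set, and invokes the stated equivalence.
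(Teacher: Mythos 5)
Your proof is correct and follows essentially the same route as the paper: define $\phi_k(x)=\int\varphi\,d\tm^k_x$, use lemma~\ref{l.measurable} to identify the weak $L^2(\mu)$ limit as $x\mapsto\int\varphi\,d\tm_x$, and conclude with lemma~\ref{l.closed}. The only difference is cosmetic: the paper shows directly that the whole sequence converges weakly to the desired function (so your subsequence extraction via reflexivity is an unnecessary detour), while your explicit handling of a.e.\ versions and of a countable dense family in $C(\cE)$ is a welcome precision the paper leaves implicit.
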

\begin{proof}
Take $\phi_k(x)=\int \varphi d\tm^k_{x}$, this function bounded then it is in $L^2(\mu)$. For any function $\rho\in L^2(\mu)$ we have that 
$$
\int \rho\phi_k d \mu=\int \rho(x)\int \varphi(v)d\tm^k_{x}(v)d\mu=\int \rho\times \varphi d\tm^k
$$
So by lemma~\ref{l.measurable} we have that 
$$
\int \rho\phi_k d \mu\to \int \rho\phi d \mu,
$$
where $\phi(x)=\int \varphi d\tm_{x}$. By hypothesis $\phi_k$ is $\cB_0$ measurable, then by lemma~\ref{l.closed} $\phi$ is $\cB_0$ measurable.
\end{proof}

\bibliography{bib}
\bibliographystyle{plain}

\end{document}